\documentclass[reqno,11pt]{amsart}
\usepackage{geometry}
\usepackage{mathtools,amssymb,amsthm,mathrsfs,color,lineno,paralist,graphicx,float}
\usepackage[colorlinks,
linkcolor=blue,
anchorcolor=green,
citecolor=blue, 
]{hyperref}
\usepackage{tikz}
\usepackage{graphicx}

\usepackage{enumitem}
\usepackage[T1]{fontenc}
\usepackage[utf8]{inputenc}
\usepackage{subfig} 
\usepackage[justification = centering, labelsep =period]{caption} 

\usepackage{calc}
\definecolor{bleu1}{RGB}{0,57,128}
\def\bleu1{\color{bleu1}}

\usepackage{etoolbox}
\patchcmd{\section}{\normalfont}{\normalfont \bleu1}{}{}
\patchcmd{\subsection}{\normalfont}{\normalfont \bleu1}{}{}
\patchcmd{\subsubsection}{\normalfont}{\normalfont \bleu1}{}{}

\newtheorem{proposition}{Proposition}[section]
\newtheorem{theorem}{Theorem}[section]

\newcommand{\Z}{{\mathbb Z}}

\newcommand{\R}{{\mathbb R}}

\newcommand{\T}{{\mathbb T}}

\def\diag{{\mathrm{diag}}}

\def\0{{\mathbf 0}}

\usepackage{tikz,tikz-3dplot}
\tdplotsetmaincoords{80}{45}
\tdplotsetrotatedcoords{-90}{180}{-90}
\tikzset{surface/.style={draw=blue!70!black, fill=blue!40!white, fill opacity=.6}}

\usepackage{pgfplots}
\usepgfplotslibrary{polar}
	\usepgfplotslibrary{polar}
\pgfplotsset{compat=1.17}
\makeatletter
\tikzset{reuse path/.code={\pgfsyssoftpath@setcurrentpath{#1}}}
\makeatother

\begin{document}

	\title[]{Anderson localization of long-range quasi-periodic operators via Dynamical Rigidity } 

\author{Zhenfu Wang}
\address{
	Chern Institute of Mathematics and LPMC, Nankai University, Tianjin 300071, China
}
\email{zhenfuwang@mail.nankai.edu.cn}

 \author{Jiangong You}
 \address{
 Chern Institute of Mathematics and LPMC, Nankai University, Tianjin 300071, China} 
 \email{jyou@nankai.edu.cn}

\author{Qi Zhou}
\address{
	Chern Institute of Mathematics and LPMC, Nankai University, Tianjin 300071, China
}
\email{qizhou@nankai.edu.cn}

	\begin{abstract}
	We  establish Anderson localization for long-range quasi-periodic operators with large trigonometric potentials and Diophantine frequencies, the proof is based on   a novel dynamical rigidity argument. 
		
			\end{abstract}
	\maketitle

\section{Introduction}\label{main}Consider the long-range quasi-periodic operator $H_{v,\varepsilon w,\alpha,x}$ acting on $\ell^{2}(\mathbb{Z}^d)$:
\begin{equation}\label{long-range-op}
    (H_{v,\varepsilon w,\alpha,x}u)_n = \varepsilon \sum_{k\in\mathbb{Z}^d} w_k\,u_{n+k} + v(x+\langle n,\alpha\rangle)\,u_n, \qquad n\in\mathbb{Z}^d,
\end{equation}
where $\alpha\in\mathbb{T}^d$ is the frequency, $w_k\in\mathbb{C}$ are the Fourier coefficients of a real analytic function $w:\mathbb{T}^d \rightarrow \mathbb{R}$, and $v:\mathbb{T} \rightarrow \mathbb{R}$ is real analytic. 
Setting $v(\theta) = 2\cos(2\pi \theta)$ yields the widely studied long-range operator:
\begin{equation*}\label{longgeneral}
    (H_{2\cos,\varepsilon w,\alpha,x}u)_n = \varepsilon \sum_{k\in\mathbb{Z}^d} w_k\,u_{n+k} + 2\cos(x+\langle n,\alpha\rangle)\,u_n, \qquad n\in\mathbb{Z}^d.
\end{equation*}
The significance of this model stems from its deep connection, via Aubry duality \cite{AA}, to the one-dimensional quasi-periodic Schr\"odinger operator:
\begin{equation}\label{1.5}
    (H_{\varepsilon w, \alpha, \theta} u)_n = u_{n+1} + u_{n-1} + \varepsilon w(\theta + n\alpha) u_n, \qquad n\in\mathbb{Z}.
\end{equation}
Aubry duality remains a pivotal tool in spectral theory, instrumental in analyzing both localization-delocalization transitions (\cite{AJ2,AYZ,BJ,GJ,JK,Puig1}) and the Cantor spectrum problem (\cite{AJ3,GJY,Puig1,Puig11}).

If we further assume $w(\theta)=2\cos(2\pi \theta)$, the model reduces to the celebrated almost Mathieu operator (AMO):
\begin{equation}\label{amo}
    (H_{\lambda, \alpha,\theta}u)_n = u_{n+1} + u_{n-1} + 2\lambda\cos(\theta+ n\alpha) u_n.
\end{equation}
Peierls \cite{p} originally proposed this model to describe electrons on a 2D lattice within a uniform magnetic field \cite{harper, R}.

The study of localization in disordered media is a fundamental frontier in solid-state physics. Originating from Anderson's 1958 work \cite{anderson} on the anomalous relaxation times of electron spins in doped semiconductors, \emph{Anderson localization (AL)} is now central to understanding quantum transport, playing a pivotal role in phenomena such as the quantization of Hall conductance \cite{ag,h}. Mathematically, a Schr\"odinger operator exhibits AL if it possesses a complete basis of exponentially decaying eigenfunctions.

Despite extensive progress over the past decades, the available localization results
for quasi-periodic Schr\"odinger operators essentially fall into two distinct categories,
each with intrinsic limitations.

The first class of results concerns \emph{fixed phase} localization.
In this setting, one proves Anderson localization for a fixed phase while allowing the frequency to vary.
A landmark contribution in this direction is the work of Bourgain and Goldstein \cite{BG00}, they proved that for the one-dimensional Schr\"odinger operator $H_{v, \alpha, \theta}$ as in \eqref{1.5}, localization holds for fixed phases and typical frequencies, provided the energy lies in the positive Lyapunov exponent region. These results were later extended to the higher-dimensional setting  \cite{Bou02, Bou07,Liu, JLS}.  The proof is of non-perturbative nature, that is  the largeness of the frequeny doesn't depend on frequency.

The second class of results addresses \emph{fixed frequency} localization.  For the AMO $H_{\lambda, \alpha, \theta}$ defined in \eqref{amo}, Jitomirskaya developed a non-perturbative approach to establish Anderson localization for all Diophantine frequencies \cite{Jit94, Jit99}, and a.e. phases. This landmark result was subsequently extended to the Liouville frequency regime by Avila-You-Zhou \cite{AYZ}, and independently by Jitomirskaya and Liu \cite{JL18}.
Regarding perturbations of the AMO (more generally, for Type I operators, see \cite{GJY} for the precise definition),  corresponding results were obtained in \cite{hs, HS,GJ}.

For  general  potentials, historical results have been highly restricted to cosine or cosine-type forms. The research addressing these potentials generally follows three main methodological paths:

\begin{itemize}
\item {Multi-Scale Analysis (MSA):} Fr\"ohlich, Spencer and Wittwer used MSA to prove Anderson localization for one-dimensional Schr\"odinger operators with even $C^2$ cosine-type potentials \cite{FSW90}. This result was later extended to higher dimensions by Surace \cite{Sur96,Sur90}, and for the arithmetic case by Cao, Shi and Zhang \cite{CSZ23}. The evenness assumption was subsequently removed in one dimension by Forman and VandenBoom \cite{FV21}, and in higher dimensions by Cao, Shi and Zhang \cite{CSZ}.

\item {Perturbative argument:} Parallel to the development of MSA, Sinai established localization for one-dimensional $C^2$ cosine-type potentials using perturbative methods and geometric analysis of eigenvalues \cite{Sin87}. This line of research was further advanced by Chulaevsky and Dinaburg \cite{CD93,D}, who employed KAM-type techniques to extend these results to higher dimensions.

\item{Reducibility and Aubry Duality:} Anderson localization can also be approached by investigating the reducibility of the associated dual cocycles and leveraging Aubry duality \cite{AYZ, GY, GYZ23, GYZ24, JK}. This method translates the spectral problem into a dynamical one.
\end{itemize}

An important intermediate result is due to Eliasson \cite{Eli97,E}, who, using KAM-type
methods, proved that for a fixed Diophantine frequency and sufficiently large analytic
potentials, the associated quasi-periodic Schr\"odinger operator has pure point spectrum.
Nevertheless, the exponential decay of eigenfunctions was not established.
Very recently, Cao, Shi and Zhang  proved Anderson localization for general analytic potentials and fixed Diophantine frequencies \cite{CSZ1}.  Here, we  offer a significantly shorter  proof for trigonometric polynomials:

\begin{theorem}
Let $\alpha \in DC$\footnote{We say that $\alpha\in DC$, if there exists $\gamma>0,\tau>d-1$, such that $\inf_{j\in\Z}|\langle k,\alpha\rangle-j|\geq\frac{\gamma}{|k|^\tau},\forall k\in\Z^d\backslash \{0\}.$} and $v$ be a trigonometric polynomial. Assume that $|w_k| \leq Ce^{-c|k|}$ for some constants $C, c > 0$. Then there exists $\varepsilon_0(\alpha, v, w) > 0$, such that if $|\varepsilon|<\varepsilon_0$, $H_{v,\varepsilon w,\alpha,x}$ has AL for a.e. $x\in\T$.
\end{theorem}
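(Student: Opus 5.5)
Via Aubry duality, the long-range operator $H_{v,\varepsilon w,\alpha,x}$ on $\ell^2(\mathbb{Z}^d)$ corresponds to a one-frequency-vector Schr\"odinger-type operator. The natural dual acts on $\ell^2(\mathbb{Z})$ and has finite range, since $v$ is a trigonometric polynomial, say of degree $m$:
\begin{equation*}
(\widehat H_{\theta} u)_n=\sum_{|k|\le m}\hat v_k u_{n+k}+\varepsilon w(\theta+n\alpha)u_n ,\qquad \theta\in\mathbb{T}^d .
\end{equation*}
Its eigen-equation is a $2m$-dimensional quasi-periodic cocycle $(\alpha,A_{E,\varepsilon}(\theta))$ with $A_{E,\varepsilon}$ analytic on $\mathbb{T}^d$. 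For $\varepsilon=0$ the cocycle is constant, namely the companion matrix of the polynomial $\sum_k \hat v_k z^k-E$, and it is close to constant when $|\varepsilon|$ is small.

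The plan follows the reducibility route, which converts Anderson localization for a.e.\ $x$ into quantitative almost-reducibility of the dual cocycles (as in \cite{AYZ,GYZ23}). I would carry it out in three steps.

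\textbf{Step 1 (KAM almost reducibility).} Since $\alpha\in DC$ and $A_{E,\varepsilon}-A_{E,0}$ is $O(\varepsilon)$ in an analytic strip, a multi-dimensional KAM scheme for $GL(2m,\mathbb{C})$-cocycles (Eliasson/Hou--You type) gives, uniformly in $E$, conjugacies $B_j$ with controlled growth $\|B_j\|\le e^{o(|k_j|)}$. These bring the cocycle to $e^{A_j}+f_j$ with $\|f_j\|$ super-exponentially small. Resonances occur at integer vectors $k_j\in\mathbb{Z}^d$ with $|k_j|\to\infty$.

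\textbf{Step 2 (from eigenfunctions to resonances).} Take a polynomially bounded generalized eigenfunction $u$ of $H_{v,\varepsilon w,\alpha,x}$ with energy $E$; by Sch'nol's theorem, spectrally almost every energy has one. Its Fourier transform produces an $L^2$ (Bloch-type) solution of the dual cocycle, i.e.\ an invariant section $\theta\mapsto U(\theta)$ with $A_{E,\varepsilon}(\theta)U(\theta)=e^{2\pi i x}U(\theta+\alpha)$. Under the conjugacies $B_j$, this becomes an almost invariant section of the nearly constant cocycle $e^{A_j}$. Comparing Fourier modes then forces $e^{2\pi i x}$ to be within $\|f_j\|$ of $e^{2\pi i(\rho_\ell+\langle k,\alpha\rangle)}$ for some eigen-rotation $\rho_\ell$ of $A_j$ and some $k$.

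\textbf{Step 3 (rigidity and decay).} This is the ``dynamical rigidity'' step. I want to show that, for $x$ outside a measure-zero set (non-resonant $x$, i.e.\ $\|2x-\langle k,\alpha\rangle\|\ge \gamma'|k|^{-\tau'}$ relative to the finitely many rotation data), the resonances cannot recur: the existence of the invariant section forces $E$ into a regime where the cocycle is actually reducible to a constant with an eigenvalue exactly $e^{2\pi i (x+\langle k_0,\alpha\rangle)}$ for a fixed $k_0$. Then $U=B\,e_{k_0}$-type. Since $B$ is analytic on a strip of fixed width, the Fourier coefficients of $U$, which are exactly $u_n$ up to the shift by $k_0$, decay exponentially. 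Completeness then follows because every spectral measure is supported on energies carrying such eigenfunctions; together with pure point spectrum this yields AL for a.e.\ $x$.

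\textbf{Main obstacle.} The hard step is the rigidity in Step 3 for the higher-dimensional, non-symplectic-in-general $2m\times 2m$ cocycle. Two things must be shown:
\begin{itemize}
\item the $2m$ eigen-rotations of the reduced constant are distinct enough that the eigenvalue $e^{2\pi ix}$ is matched by exactly one of them;
\item the growth of $B_j$ at resonances is sub-exponential relative to the strip width.
\end{itemize}
I would first handle this by a Hermitian/dual-symmetry argument: realness of $v,w$ gives that the Lyapunov-type spectrum is symmetric, and for $\varepsilon$ small the spectrum of the constant part splits into $m$ expanding and $m$ contracting directions except near the central pair. That would reduce the problem to an effectively $SL(2)$-like center bundle, where the rigidity arguments of \cite{AYZ} apply.
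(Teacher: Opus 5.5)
\paragraph{Main gap: Step 3.} Your Step 3 has a genuine gap, and the fix you propose for it rests on a false premise.

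At $\varepsilon=0$, the unimodular eigenvalues $e^{2\pi i\theta}$ of the companion matrix $A_{E,0}$ are exactly the real solutions of $v(\theta)=E$. For $E$ in the spectrum there can be as many as $2m$ of them. For instance, with $v(\theta)=2\cos(2\pi m\theta)$ and $|E|<2$, all $2m$ eigenvalues lie on the unit circle. So there is no splitting into $m$ expanding and $m$ contracting directions with a two-dimensional center. There is also no single fibred rotation number, and nothing $\mathrm{SL}(2)$-like to which the arguments of \cite{AYZ} could be applied.

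For the same reason, the arithmetic exclusion of resonances cannot work. The resonances of a $\mathrm{GL}(2m,\mathbb{C})$ KAM scheme are $\rho_i-\rho_j\approx\langle k,\alpha\rangle$ between pairs of eigen-rotations, whereas your Step 2 ties only one of them to $x$ (namely $x\approx\rho_\ell+\langle k,\alpha\rangle$). The Diophantine condition on $2x$ works for the almost Mathieu operator only because of the $\rho\mapsto-\rho$ pairing of $\mathrm{SL}(2,\mathbb{R})$ cocycles. With up to $2m$ unimodular eigen-rotations and no such pairing in general, the $\rho_j$ with $j\neq\ell$ are not determined by $x$. No condition on $x$ alone prevents their resonances from recurring. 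Consequently, the claim that the invariant section ``forces $E$ into a regime where the cocycle is actually reducible'' is precisely the unproved point.

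\paragraph{Secondary issue: Step 2.} A polynomially bounded generalized eigenfunction on $\mathbb{Z}^d$ has only a distributional Fourier transform. It does not give an $L^2$ invariant section.

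\paragraph{The missing idea.} The missing idea is to discard the non-reducible energies measure-theoretically rather than dynamically.
\begin{enumerate}
\item The paper starts from Eliasson's theorem \cite{E}: for a.e.\ $x$, the operator has pure point spectrum with a complete family of $\ell^1$ eigenfunctions. Their Fourier transforms are therefore genuine $L^2$ invariant sections.
\item It then uses the reducibility theorem of \cite{WXYZ}: outside a set $\mathcal{S}$ of zero Hausdorff dimension, the dual cocycle is $C^\omega_{h/4}$-conjugate to a constant diagonal matrix.
\item Because the IDS is absolutely continuous, $0=N(\mathcal{S})=\int_{\mathbb{T}}\mu_{x,\delta_p}(\mathcal{S})\,dx$ for every $p$. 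An eigenvalue carries positive $\mu_{x,\delta_p}$-mass for some $p$, so for a.e.\ $x$ no eigenvalue lies in $\mathcal{S}$.
\item For the remaining, reducible eigenvalues the rigidity is elementary. The section $c=B^{-1}w$ satisfies $c_j(\theta+\alpha)=e^{2\pi i(\rho_j-x)}c_j(\theta)$, so each $c_j$ is a single Fourier mode. Hence $\widehat u$ is analytic on a strip and the eigenfunction decays exponentially.
\item Completeness comes from Eliasson's theorem.
\end{enumerate}
Your endpoint ($U=B\,e_{k_0}$-type) matches this. Without the measure-theoretic exclusion of $\mathcal{S}$, however, you have no route to the reducible case.
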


We make a comment on the proof. Previous reducibility approached proof are restricted to  $\mathrm{SL}(2,\mathbb{R})$ setting,  where the fibred rotation number plays a fundamental role. By applying Aubry duality, the fibred rotation number of the reducible dual cocycle strictly determines the localized phase. However, when moving to long-range operators with general trigonometric potentials, the dual cocycle take values in  Hermitian-symplectic group $\mathrm{HSp}(2l)$\footnote{$\mathrm{HSp}(2l) = \left\{ A \in \mathbb{C}^{2l \times 2l} \,\big|\, A^* J A = J \right\}$,where $ J = \begin{pmatrix} O & -I_l \\ I_l & O \end{pmatrix} .$}. In these higher dimensions, the concept of a single fibred rotation number breaks down. One  instead obtain multiple rotational phases (from the eigenvalues of the diagonalized matrix), and these high-dimensional parameters cannot intrinsically offer specific localized phases using the traditional arguments.

To overcome the high-dimensional barrier, we reframe a structural disadvantage into a dynamical rigidity argument. Instead of attempting to extract a localized phase from a non-existent fibred rotation vectors, we start with a given phase $x$ known to admit an $\ell^2$ eigenfunction (as established by Eliasson \cite{Eli97,E}). We then prove that if the system is analytically reducible at the corresponding energy, the phases of the diagonalized cocycle must rigidly align with $x$ (Proposition \ref{thm:important}). By combining this rigidity with the measure-theoretic protection provided by the absolute continuity of the IDS \cite{WXYZ}, Aubry duality directly yields Anderson localization without resorting to heavy perturbative KAM expansions. Although this framework is currently restricted to trigonometric polynomials, we believe this dynamical approach can ultimately be extended to all analytic potentials via approximation techniques—a direction we intend to pursue in future work.

\section{Aubry duality and dynamical rigidity}

If $v(\theta)=\sum_{|k|\leq l}v_ke^{2\pi ik\theta}$ is trigonometric polynomial, we consider the dual operator $\hat{H}_{\varepsilon w, v, \alpha, \theta}$, which acts on $\ell^2(\mathbb{Z})$ as:
\begin{equation*}\label{dual-op}
(\hat{H}_{\varepsilon w, v, \alpha, \theta} u)n = \sum_{|k| \le l} v_k u_{n+k} + \varepsilon w(\theta + n\alpha) u_n, \quad n \in \mathbb{Z}.
\end{equation*} The eigenfunction $\hat{H}_{\varepsilon w, v,\alpha,\theta}u=Eu$ induced a quasiperiodic  cocycle $(\alpha,A_E(\theta))$ where 
\[
A_E(\theta)=\begin{pmatrix}
-\frac{v_{\ell-1}}{v_{\ell}} & \cdots & -\frac{v_{1}}{v_{\ell}} &
\frac{E-v_0-\varepsilon w(\theta)}{v_{\ell}} &
-\frac{v_{-1}}{v_{\ell}} & \cdots & -\frac{v_{-\ell+1}}{v_{\ell}} &
-\frac{v_{-\ell}}{v_{\ell}} \\
1 & 0 & \cdots & 0 & 0 & \cdots & 0 & 0\\
0 & 1 & \ddots & 0 & 0 & \cdots & 0 & 0\\
\vdots & & \ddots & \ddots & \vdots & & \vdots & \vdots\\
0 & \cdots & 0 & 1 & 0 & \cdots & 0 & 0\\
0 & \cdots & 0 & 0 & 1 & \ddots & 0 & 0\\
\vdots & & \vdots & \vdots & & \ddots & \ddots & \vdots\\
0 & \cdots & 0 & 0 & 0 & \cdots & 1 & 0
\end{pmatrix}.
\]
The  core of the paper is the following  Proposition \ref{thm:important}, which can be viewed as a  rigidity-type result in dynamical systems:

\begin{proposition}\label{thm:important}
Let \(h>0\), \(\varepsilon>0\), and let \(\alpha\in\mathbb{R}^d\) be irrational.
Denote by \(\mathcal R\) the set of energies \(E\) for which the dual cocycle
\((\alpha,A_E)\) is \(C_h^\omega\)-diagonally reducible, that is, there exists
$B\in C_h^\omega(\mathbb{T}^d,\mathrm{GL}(2\ell,\mathbb{C}))$
such that
\begin{equation*}\label{conjugacy}
B(\theta+\alpha)^{-1}A_E(\theta)B(\theta)
=
\Lambda
:=
\diag\bigl(e^{2\pi i\rho_1},\dots,e^{2\pi i\rho_{2\ell}}\bigr).
\end{equation*}
If \(E\in\mathcal R\) and \(x\in\mathbb{T}\) are such that \(E\) is an eigenvalue of
\(H_{v,\varepsilon w,\alpha,x}\), then we have the following:
\begin{enumerate}
    \item there exists $1\leq j\leq 2d$, $k\in\Z^d$, such that $\rho_j-x=\langle k,\alpha\rangle \mod \Z$, in particular $\Im \rho_j=0$. 
    \item the corresponding eigenfunction $u \in l^2(\mathbb{Z})$ decays exponentially: $|u_n|\le C\, e^{-2\pi h|n|}.$
\end{enumerate}
\end{proposition}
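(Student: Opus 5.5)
The plan is to turn the $\ell^2$ eigenfunction into an $L^2$ solution of the dual cocycle equation via Aubry duality, and then to use the reducibility to $\Lambda$ to force that solution to be a trigonometric monomial in the diagonal coordinates. Let $u\in\ell^2(\mathbb{Z}^d)$ with $H_{v,\varepsilon w,\alpha,x}u=Eu$, and set
\[
U(\theta)=\sum_{n\in\mathbb{Z}^d}u_n e^{2\pi i\langle n,\theta\rangle}\in L^2(\mathbb{T}^d).
\]
Taking the Fourier transform of the eigenvalue equation, and writing $v(x+\langle n,\alpha\rangle)$ as a sum of exponentials $e^{2\pi i k(x+\langle n,\alpha\rangle)}$ with $|k|\le \ell$, one checks that the sequence $\psi_m(\theta)=e^{2\pi i m x}U(\theta+m\alpha)$, $m\in\mathbb{Z}$, satisfies
\[
\sum_{|k|\le \ell}v_k\psi_{m+k}(\theta)+\varepsilon w(\theta+m\alpha)\psi_m(\theta)=E\psi_m(\theta)
\]
for a.e. $\theta$. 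The signs and conventions (e.g. $v_k$ versus $v_{-k}$, $x$ versus $-x$) may need adjusting; this is the routine duality computation. Stacking $2\ell$ consecutive values gives a vector
\[
\Phi(\theta)=\bigl(U(\theta+(\ell-1)\alpha),\dots,U(\theta-\ell\alpha)\bigr)^T,
\]
with suitable phase factors, which lies in $L^2(\mathbb{T}^d,\mathbb{C}^{2\ell})$, is not identically zero, and satisfies
\[
A_E(\theta)\Phi(\theta)=e^{2\pi i x}\Phi(\theta+\alpha).
\]

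Next I conjugate by $B$. Since $B\in C^\omega_h(\mathbb{T}^d,\mathrm{GL}(2\ell,\mathbb{C}))$, both $B$ and $B^{-1}$ are bounded on $\mathbb{T}^d$, so $Y:=B^{-1}\Phi\in L^2$ and $Y\neq 0$. From the conjugacy relation, $\Lambda Y(\theta)=e^{2\pi i x}Y(\theta+\alpha)$, which componentwise reads
\[
e^{2\pi i\rho_j}Y_j(\theta)=e^{2\pi i x}Y_j(\theta+\alpha).
\]
Comparing Fourier coefficients gives, for every $k\in\mathbb{Z}^d$,
\[
\hat Y_j(k)\bigl(e^{2\pi i\rho_j}-e^{2\pi i(x+\langle k,\alpha\rangle)}\bigr)=0.
\]
Since $Y\ne 0$, some $\hat Y_j(k)\neq0$, and then $e^{2\pi i\rho_j}=e^{2\pi i(x+\langle k,\alpha\rangle)}$. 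Taking moduli forces $\Im\rho_j=0$, and this gives $\rho_j-x=\langle k,\alpha\rangle \bmod \mathbb{Z}$, proving (1). The index $j$ runs over $1,\dots,2\ell$, which is what the "$2d$" in the statement should mean.

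For (2), I use the irrationality of $\alpha$: for a fixed $j$, at most one $k$ can satisfy $\rho_j-x-\langle k,\alpha\rangle\in\mathbb{Z}$. Hence each $Y_j$ is either $0$ or $c_je^{2\pi i\langle k_j,\theta\rangle}$, an entire function. Therefore $\Phi=BY$ extends analytically to the strip $|\Im\theta|<h$ and is bounded there, since $B$ is. In particular the first component, which is a translate of $U$, is analytic and bounded on the strip. The standard contour-shift estimate on Fourier coefficients then gives $|u_n|\le C e^{-2\pi h|n|}$, using boundedness up to the boundary as encoded in the norm of $C^\omega_h$; otherwise one gets $h-\delta$ for every $\delta>0$.

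The main obstacle is bookkeeping rather than ideas. One must verify the duality identity with the exact normalization of $A_E$, including the phase factor $e^{2\pi i x}$ and the fact that the $\theta$-variable lives on $\mathbb{T}^d$ while the cocycle is in $m\in\mathbb{Z}$. One must also make sure the Fourier argument is legitimate in $L^2$, so that identities hold a.e. and $B^{-1}$ is bounded. Once $\Phi$ is correctly set up, the rigidity step is a one-line Fourier computation.
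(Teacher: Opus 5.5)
Your proposal is correct and follows the paper's proof essentially step for step. Aubry duality turns $u$ into a nonzero $L^2$ solution of $A_E(\theta)\Phi(\theta)=e^{2\pi i x}\Phi(\theta+\alpha)$; conjugating by $B$ and comparing Fourier coefficients forces each component of $B^{-1}\Phi$ to be a single mode $e^{2\pi i\langle k,\theta\rangle}$, which gives (1) with $j\le 2\ell$, as you note. Then $\Phi=BY$ shows $\widehat u$ is analytic and bounded on the strip of width $h$, giving the exponential decay. Your stacking $(\ell-1,\dots,-\ell)$ is in fact the one that matches $A_E(\theta)$ exactly; the paper's indices $2\ell-1,\dots,0$ amount to a harmless shift $\theta\mapsto\theta+\ell\alpha$ in the cocycle.
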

\begin{proof}
    Let \(u=(u_n)_{n\in\mathbb{Z}}\in\ell^2(\mathbb{Z}^d)\) satisfy
\(H_{v,\varepsilon w,\alpha,x}u=Eu\), and define its Fourier transform
$\widehat u(\theta)=\sum_{n\in\mathbb{Z}^d}u_n e^{2\pi i\langle n,\theta\rangle}
\in L^2(\mathbb{T}^d).$
By Aubry duality, for a.e.\ \(\theta\in\mathbb{T}^d\) the sequence
$\widetilde u_\theta(n):=\widehat u(\theta+n\alpha)\,e^{2\pi i n x},
 n\in\mathbb{Z},
$
is a nontrivial solution of the dual equation
\(\widehat H_{\varepsilon w,v,\alpha,\theta}\widetilde u_\theta=E\widetilde u_\theta\).
Define
\[
w(\theta):=\bigl(\widetilde u_\theta(2\ell-1),\dots,\widetilde u_\theta(1),
\widetilde u_\theta(0)\bigr)^{\top}
\in L^2(\mathbb{T}^d,\mathbb{C}^{2\ell}).
\]
Then $A_E(\theta)w(\theta)=e^{2\pi i x}w(\theta+\alpha)$
for a.e. $\theta$.

Set \(c(\theta):=B(\theta)^{-1}w(\theta)\).
Since \(B\) conjugates \(A_E\) to \(\Lambda\), we obtain
$\Lambda c(\theta)=e^{2\pi i x}c(\theta+\alpha).$
Writing \(c=(c_1,\dots,c_{2\ell})^\top\), this implies for each
\(1\le j\le 2\ell\) and a.e.\ \(\theta\),
\begin{equation*}\label{eq:cohom}
c_j(\theta+\alpha)=e^{2\pi i(\rho_j-x)}c_j(\theta).
\end{equation*}
Since $c(\theta)\not\equiv 0$, there exists at least one $j$ such that $c_j(\theta)\not\equiv0$. By examining the Fourier series of $c_j$, there exists a unique $k \in \mathbb{Z}^d$ such that $
\rho_j - x = \langle k, \alpha \rangle \mod \mathbb{Z},
$ which in particular implies (1) and $
c_j(\theta) = a_j e^{2\pi i \langle k, \theta \rangle},
$ where $a_j \in \mathbb{C}$ is a constant. 
The same argument applies to each component $c_j$,
every nontrivial component is a single Fourier mode, while the remaining ones
vanish identically.
Consequently, $c(\cdot) \in C_h^\omega(\mathbb{T}^d, \mathbb{C}^{2\ell})$. Since $B(\cdot) \in C_h^\omega(\mathbb{T}^d, \mathrm{GL}(2\ell, \mathbb{C}))$, it follows that $w(\cdot) \in C_h^\omega(\mathbb{T}^d, \mathbb{C}^{2\ell})$, and consequently, $\widehat{u}(\theta) = \tilde{u}_\theta(0) \in C_h^\omega(\mathbb{T}^d, \mathbb{C})$ with $
|u_n| \leq \|w\|_h e^{-2\pi h |n |}.
$
\end{proof}

\section{Proof of Theorem \ref{main}}
We assume that the potential $v(\theta)$ is a trigonometric polynomial.
There are three main ingredients for our proof. The starting point is Eliasson's famous pure point result: 
\begin{theorem}\cite{E}\label{thm:point}
Suppose that \(\alpha \in DC\), $|w_n| \leq Ce^{-c|n|}$ for some constants $C, c > 0$, $v(\cdot)\in C^\omega(\T,\R)$.
Then there exist \(\varepsilon_1=\varepsilon_1(\alpha,v,w)>0\) and a Borel measurable function  $E\colon\mathbb{T}\to\mathbb{R}$
such that for all \(|\varepsilon|<\varepsilon_1\),
 for almost every \(x\in\mathbb{T}\), the values $
E\bigl(x+\langle k,\alpha\rangle\bigr),$ $k\in\mathbb{Z}^d,$
are eigenvalues of \(H_{v,\varepsilon w,\alpha,x}\), and the corresponding eigenfunctions
\(\{u^k\}_{k\in\mathbb{Z}^d}\) belong to \(\ell^1(\mathbb{Z}^d)\) and form a complete basis of
\(\ell^2(\mathbb{Z}^d)\).
\end{theorem}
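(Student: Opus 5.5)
This statement is Eliasson's pure point theorem, quoted from \cite{E}. I would not reprove it here: it enters as an external ingredient. Still, the route I would take to verify it in the present setting is the following. Write the problem in the large-coupling normalization. Dividing by $\varepsilon$ is not the right scaling, so instead view $H_{v,\varepsilon w,\alpha,x}$ as a small exponentially decaying Toeplitz perturbation $\varepsilon W$ of the multiplication operator by $v(x+\langle n,\alpha\rangle)$. The unperturbed operator is diagonal, with eigenvalues $v(x+\langle k,\alpha\rangle)$ and eigenvectors $\delta_k$.

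The plan is a KAM-type diagonalization carried out in the phase variable. I would seek a sequence of unitary conjugations $U_m$, covariant under the shift $n\mapsto n+k$, $x\mapsto x+\langle k,\alpha\rangle$. They should reduce $H$ to $D_m+R_m$, where $D_m$ is block diagonal with blocks of growing size and $\|R_m\|$ decays superexponentially. At each step the small divisors are the differences $v(x+\langle n,\alpha\rangle)-v(x+\langle n',\alpha\rangle)$. These are controlled away from a small set of phases by the Diophantine condition together with the non-degeneracy of analytic $v$; the latter is a \L{}ojasiewicz-type lower bound on sublevel sets of $v(\cdot+\beta)-v(\cdot)$. Resonant pairs are handled by grouping them into blocks rather than eliminating them. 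This is Eliasson's key idea: the blocks are kept, the estimates are made uniform in the phase, and the resulting eigenvalue function $E(x)$ is measurable.

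Passing to the limit, the conjugations $U_m$ converge for a.e.\ $x$. Covariance gives that $E(x+\langle k,\alpha\rangle)$ is the eigenvalue attached to site $k$, with eigenvector $u^k$. Completeness follows because the limiting conjugation is unitary. The $\ell^1$ bound comes from the summable off-diagonal decay of each $U_m$, which is inherited from $|w_k|\le Ce^{-c|k|}$.

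The main obstacle is measure control of the resonant phases. At each scale one must show that the set of $x$ for which some block has nearly degenerate eigenvalues has summable measure, so that Borel--Cantelli yields an a.e.\ statement. This requires the Diophantine condition on $\alpha$ combined with transversality estimates for the eigenvalues of the analytic blocks as functions of $x$. For this step I would simply invoke \cite{E}.
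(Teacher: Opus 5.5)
Your proposal matches the paper: Theorem~\ref{thm:point} is not proved there but is imported as a black box from Eliasson~\cite{E}, exactly as you propose. Your KAM block-diagonalization outline is a fair heuristic description of Eliasson's method, but its delicate points (measure control of resonant phases, and unitarity of the limiting conjugation, which gives completeness) are supplied by \cite{E}, not by the sketch itself.
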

Let $\mathcal{A}\subset\mathbb{T}$ denote the full-measure set of phases
for which the conclusion of Theorem~\ref{thm:point} holds.
Fix such an $x\in \mathcal{A}$. Then the operator $H_{v,\varepsilon w,\alpha,x}$ has pure point spectrum with eigenvalues
$
E(x+\langle k,\alpha\rangle), k\in\mathbb{Z}^d.
$

Next we recall the following full measure reducibility result from \cite{WXYZ}:
    
    \begin{theorem}[{\cite[Theorem 2.3]{WXYZ}}]\label{lem:reduction}
        There exists ${\varepsilon}_2 = {\varepsilon}_2(\alpha, v, w) > 0$ such that for $|\varepsilon| \leq {\varepsilon}_2$, there exists a  zero Hausdorff dimension set $\mathcal{S} \subset \Sigma$ with the following property: For every $E \in \Sigma \setminus \mathcal{S}$, there exists $B_E \in C_{\frac{h}{4}}^\omega(\mathbb{T}^d, \mathrm{GL}(2l, \mathbb{C}))$ satisfying
        \[
        B_E^{-1}(\cdot + \alpha)A_E(\cdot)B_E(\cdot) = \diag\bigl(e^{2\pi i\rho_{1,E}},\dots,e^{2\pi i\rho_{2\ell,E}}\bigr).
        \]
 with $\rho_{i,E} \neq \rho_{j,E}$ for $i \neq j$.
\end{theorem}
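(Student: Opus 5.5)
We would prove this by a perturbative KAM scheme for the dual cocycle $(\alpha,A_E)$, followed by a resonance-counting argument that controls the exceptional energies. For $\varepsilon=0$ the matrix $A_E$ is the constant companion matrix of the Laurent polynomial $v(z)-E$, where $v(z)=\sum_{|k|\le\ell}v_kz^k$. Its eigenvalues are the $2\ell$ roots of $z^{\ell}(v(z)-E)=0$. The unimodular roots $e^{2\pi i\theta}$ correspond exactly to the real solutions of $v(\theta)=E$; the non-unimodular roots come in pairs $z,\bar z^{-1}$, because $v$ is real, so $v_{-k}=\overline{v_k}$.

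\textbf{Normal form and perturbation size.} Using this symmetry, we first fix a constant matrix conjugating $A_E$ into $\mathrm{HSp}(2\ell)$. This makes the problem an analytic quasi-periodic $\mathrm{HSp}(2\ell)$ cocycle
\[
(\alpha,\,A_E^0+\varepsilon F(\theta)),\qquad \|F\|_h\le C(v,w),
\]
with $A_E^0$ constant. Away from the finitely many energies where two roots collide (critical values of $v$, and the collisions of non-unimodular roots), $A_E^0$ is diagonalizable with uniformly separated eigenvalues on compact subsets of $\Sigma$. The finitely many bad energies are added to $\mathcal S$, and near them we use a standard block (non-diagonal) normal form.

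\textbf{KAM iteration.} We run an iteration with $\epsilon_{n+1}=\epsilon_n^{1+\sigma}$, cut-offs $N_n\sim |\log\epsilon_n|$, and strips shrinking from $h$ to $h/4$. At step $n$ the cocycle is $(\alpha,A_n e^{F_n})$ with $\|F_n\|_{h_n}\le\epsilon_n$ and $A_n=\mathrm{diag}(e^{2\pi i\rho^{(n)}_j})$. We solve the cohomological equations mode by mode. A mode $k$ with $0<|k|\le N_n$ is \emph{non-resonant} for the pair $(i,j)$ if
\[
|\rho^{(n)}_i-\rho^{(n)}_j-\langle k,\alpha\rangle|_{\T}\ge\epsilon_n^{\sigma}.
\]
Off-diagonal non-resonant terms are killed, and the diagonal terms update $\rho^{(n)}$; the Diophantine condition controls small divisors of the form $\langle k,\alpha\rangle$ alone. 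At a resonant pair we apply a rotation $\theta\mapsto\diag(e^{2\pi i\langle k_j,\theta\rangle})$ to remove the resonance, at the price of a controlled loss of analyticity.

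If only finitely many resonant steps occur, the conjugations converge in $C^\omega_{h/4}$ to a conjugacy to a constant matrix. After one more constant diagonalization this gives $B_E$ and the limiting $\rho_{j,E}$. Any coincidence $\rho_{i,E}=\rho_{j,E}$ for $i\ne j$ is itself a persistent resonance with $k=0$, so such energies either fall into $\mathcal S$ or are separated by the final resonance-removal step.

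\textbf{Main obstacle: the exceptional set.} The hardest step is showing that
\[
\mathcal S=\{E:\ \text{infinitely many resonant steps occur}\}
\]
has zero Hausdorff dimension. We would show that, for fixed $n$, each resonance condition forces $E$ into a union of at most $C N_n^{d}$ intervals of length $\lesssim\epsilon_n^{\sigma/2}$. The key input is a transversality estimate: $\partial_E(\rho^{(n)}_i-\rho^{(n)}_j)$ stays bounded away from zero, because at $\varepsilon=0$ it equals the difference of $1/v'(\theta)$ at distinct roots, plus $O(\varepsilon)$. Since $\epsilon_n$ decays super-exponentially while $N_n$ grows only logarithmically,
\[
\sum_{n\ge m} N_n^{d}\,\epsilon_n^{\sigma s/2}\to0\quad(m\to\infty)\qquad\text{for every } s>0,
\]
which gives $\dim_H\mathcal S=0$. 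The delicate points are two: keeping the transversality estimate uniform through the resonance-removal steps, and handling the non-unimodular eigenvalues, which we expect to be uniformly hyperbolic directions that can be split off first by an invariant-splitting argument.
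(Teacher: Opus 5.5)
This statement is not proved in the paper. It is quoted from \cite{WXYZ} (Theorem~2.3 there) and used as a black box, so your sketch has to stand on its own. The overall architecture is the natural Eliasson-type route: a constant $\mathrm{HSp}(2\ell)$ normal form, KAM with resonance removal, and a Borel--Cantelli covering of the energies with infinitely many resonances. However, there is a genuine gap exactly at the step you call the key input: the transversality claim is false.

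At $\varepsilon=0$ you correctly get $\partial_E(\rho_i-\rho_j)=1/v'(\theta_i)-1/v'(\theta_j)$. This is bounded below (by $2/\|v'\|_\infty$) only when $v'(\theta_i)$ and $v'(\theta_j)$ have opposite signs, i.e.\ when the two unimodular eigenvalues have opposite Krein signature. If both roots lie on increasing (or both on decreasing) branches of $v$, it is a difference of two numbers of the same sign, and it can vanish:
\begin{itemize}
\item identically for $v(\theta)=2\cos 4\pi\theta$, where $\theta_0$ and $\theta_0+\frac12$ are both roots with the same $v'$ and $\rho_i-\rho_j\equiv\frac12$;
\item at an isolated interior energy near $E=0$ for $v(\theta)=2\cos4\pi\theta+b(\cos2\pi\theta-\sin2\pi\theta)$ with $0<|b|\ll1$.
\end{itemize}

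Worse, your scheme destroys transversality by itself. After removing a resonance for an equal-signature pair, the resonant block is, to leading order and up to a scalar factor, the exponential of $2\pi i$ times the Hermitian matrix $\begin{pmatrix}\delta/2&\epsilon_n c\\ \epsilon_n\bar c&-\delta/2\end{pmatrix}$, where $\delta=\delta(E)$ is the detuning. The new eigenvalues therefore differ by $\sqrt{\delta^2+4\epsilon_n^2|c|^2}$. This is an avoided crossing, and its $E$-derivative vanishes where $\delta(E)=0$.

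Hence your claim that each resonance confines $E$ to at most $CN_n^d$ intervals of length $\lesssim\epsilon_n^{\sigma/2}$ is unproved. With it falls not only $\dim_H\mathcal S=0$ but even $|\mathcal S|=0$, which is all the paper actually uses (through $N(\mathcal S)=0$). For opposite signatures the same computation gives $\sqrt{\delta^2-4\epsilon_n^2|c|^2}$: a gap opens and the derivative only grows. That is why Eliasson's $\mathrm{SL}(2,\R)$ argument never meets this problem: there the two eigenvalues always have opposite signature, and the monotone fibred rotation number does the counting.

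What is missing is a substitute for the monotone rotation number in $\mathrm{HSp}(2\ell)$. One option is a quantitative Krein-signature monotonicity that survives the iteration: eigenvalues of one signature move counterclockwise and those of the other clockwise as $E$ increases. That would handle opposite-signature pairs. You would also need a separate mechanism for equal-signature pairs: either show that their resonances never obstruct reducibility, or propagate a higher-order (R\"ussmann-type) non-degeneracy through infinitely many avoided crossings.

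Two further steps are asserted rather than proved.
\begin{itemize}
\item \emph{Uniformity of $\varepsilon_2$.} Working on compact subsets of $\Sigma$ away from the collision energies makes the smallness condition depend on the subset. So the block normal form near those energies must be run uniformly in $E$. This is exactly where Krein collisions open hyperbolic gaps. It is also where your preliminary splitting-off of the non-unimodular directions degenerates: a pair $z,\bar z^{-1}$ can reach the unit circle only through a double root, i.e.\ at a critical value of $v$.
\item \emph{The uniform strip $h/4$.} A resonant rotation with $|k_j|\le N_n\sim|\log\epsilon_n|$ inflates the perturbation by up to $e^{2\pi N_n h'}$ on $\{|\Im\theta|<h'\}$, which is a negative power of $\epsilon_n$. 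Naive bookkeeping therefore yields a strip that depends on when the last resonance occurred, so an $E$-independent width needs an explicit argument. Any $E$-dependent strip would, however, suffice for the paper's use of Proposition~\ref{thm:important}.
\end{itemize}
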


As a direct consequence, we have the following: 
\begin{theorem}[{\cite[Theorem 1.2]{WXYZ}}]\label{ids}
        Let $\alpha\in DC$, $|w_k|\leq Ce^{-ck}$, $v(\cdot)$ is a trigonometric polynomial. There exists $\varepsilon_3=\varepsilon_3(\alpha,v,w)>0$, such that if $|\varepsilon|<\varepsilon_3$, IDS of $\{H_{v,\varepsilon w,\alpha,x}\}_{x\in\T}$ is  absolutely continuous.
    \end{theorem}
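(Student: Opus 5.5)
The plan is to show that the density of states measure $dN$ gives zero mass to the exceptional set $\mathcal S$ of Theorem~\ref{lem:reduction}, and has a locally bounded upper derivative at every other energy of $\Sigma$. Absolute continuity then follows from the de la Vallée Poussin / Rogers–Taylor decomposition: a measure has no singular part if it charges no set on which its upper derivative is infinite. Throughout I take $|\varepsilon|<\min(\varepsilon_2,\varepsilon_1)$.

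\textbf{Step 1 (pass to the dual).} I first identify the IDS of $\{H_{v,\varepsilon w,\alpha,x}\}_{x}$ with the IDS of the dual family $\{\widehat H_{\varepsilon w,v,\alpha,\theta}\}_{\theta\in\T^d}$. This is the standard consequence of Aubry duality, obtained by comparing the averaged spectral measures
$$\int \langle \delta_0,\chi_I(H_{v,\varepsilon w,\alpha,x})\delta_0\rangle\,dx \quad\text{and}\quad \int \langle \delta_0,\chi_I(\widehat H_{\varepsilon w,v,\alpha,\theta})\delta_0\rangle\,d\theta$$
through the unitary Fourier-type map used in the proof of Proposition~\ref{thm:important}. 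Hence $dN(I)=\int_{\T^d}\mu_\theta(I)\,d\theta$, where $\mu_\theta$ is the spectral measure of $\delta_0$ for the dual operator. Every subsequent estimate is done on this one-dimensional finite-range dual side, where the cocycle $(\alpha,A_E)$ is available.

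\textbf{Step 2 (bounded derivative at reducible energies).} Fix $E\in\Sigma\setminus\mathcal S$ and let $B_E$ be the conjugacy from Theorem~\ref{lem:reduction}. Since $\Lambda$ is diagonal and $\rho_{i,E}\neq\rho_{j,E}$, the iterates satisfy $A_E^{(n)}(\theta)=B_E(\theta+n\alpha)\Lambda^nB_E(\theta)^{-1}$.

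For the real $\rho$'s this gives uniformly bounded growth. For complex $\rho$'s I would use the Hermitian-symplectic structure: it pairs each $\rho$ with $\bar\rho$, so the growth is at most $e^{2\pi|\Im\rho|n}$, but only along directions that do not enter the $\ell^2$ analysis.

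I then perturb $E'=E+\delta$ and control $\|A^{(n)}_{E'}\|$ for $n\lesssim \delta^{-1}$ by a Gronwall/telescoping estimate in the conjugated coordinates, with constants depending only on $\|B_E\|_{h/4}$ and $\|B_E^{-1}\|_{h/4}$. Feeding this into a Jitomirskaya–Last type inequality, adapted to the $2\ell$-dimensional finite-range setting, I expect
$$\mu_\theta(E-\delta,E+\delta)\le C(\|B_E\|,\|B_E^{-1}\|)\,\delta$$
uniformly in $\theta$. Integrating in $\theta$ then gives $\overline{D}N(E)<\infty$.

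\textbf{Step 3 (the exceptional set is null).} It remains to show $dN(\mathcal S)=0$. I would prove that $N$ is Hölder (or at least log-Hölder) continuous. One route is the almost-reducibility scheme underlying Theorem~\ref{lem:reduction}: at the KAM step $n$, the resonant energies are covered by intervals of length $\varepsilon_n^{\sigma}$, and the IDS increment over each such interval is controlled polynomially. Another route is a Craig–Simon type bound for the dual operator. Since a measure with a Hölder continuous distribution function gives zero mass to every set of Hausdorff dimension zero, this yields $dN(\mathcal S)=0$. Steps 2 and 3 together complete the proof.

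\textbf{Main obstacle.} I expect Step 2 to be the hardest, specifically making the Jitomirskaya–Last type inequality work for a Hermitian-symplectic $2\ell\times 2\ell$ cocycle in which some $\rho_j$ may be non-real. The issue is that the exponentially growing and decaying directions must not contaminate the resolvent bound, and the natural tool is a Schur-complement decomposition along the stable, unstable and central bundles given by the columns of $B_E$. My first attempt would be to bypass transfer matrices entirely. Instead I would use the Haro–Puig/Kotani-type formula expressing $N(E)$ as a normalized sum of the real rotation numbers $\rho_{j,E}$, and then show directly that $E\mapsto\rho_{j,E}$ is Lipschitz on reducible energies, with a constant depending on $\|B_E\|_{h/4}$ and $\|B_E^{-1}\|_{h/4}$.
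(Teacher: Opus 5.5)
The paper does not prove this statement. It quotes it from \cite{WXYZ} and calls it a direct consequence of the full-measure reducibility result, Theorem~\ref{lem:reduction}. Your architecture is the natural one for such a deduction: the singular part of $dN$ is carried by $\{\overline{D}N=\infty\}$, so $\overline{D}N<\infty$ on $\Sigma\setminus\mathcal S$ together with $dN(\mathcal S)=0$ suffices, and Step 1 is standard. However, neither substantive step is actually established, and some of the mechanisms you propose fail.

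\emph{Step 2.} The Gronwall/telescoping control of $A^{(n)}_{E'}$ for $n\lesssim\delta^{-1}$ does not close. In the coordinates given by $B_E$ one has $B_E(\cdot+\alpha)^{-1}A_{E'}B_E=\Lambda+\delta P$, and $P$ couples the elliptic coordinates ($\rho_j$ real) to the hyperbolic ones ($\Im\rho_j\neq0$). Over $n\sim\delta^{-1}$ steps these coupling terms are amplified by $e^{2\pi|\Im\rho_j|n}$. Your assertion that the hyperbolic directions ``do not enter the $\ell^2$ analysis'' is exactly what has to be proved. To do so you would need:
\begin{itemize}
\item persistence of the stable/center/unstable splitting for $|E'-E|$ small in terms of $\|B_E\|_{h/4}$, $\|B_E^{-1}\|_{h/4}$ and $\min\{|\Im\rho_j|:\Im\rho_j\neq0\}$;
\item a Weyl-matrix (subordinacy) bound for the finite-range dual operator, which is not available off the shelf.
\end{itemize}
Your fallback, Lipschitz dependence of $E\mapsto\rho_{j,E}$ on reducible energies, is not well posed. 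First, $\rho_{j,E'}$ exists only for reducible $E'$, and only modulo $\Z+\langle k,\alpha\rangle$, so comparing two independent reductions gives nothing. Second, a bound on $\overline{D}N(E)$ must control \emph{all} nearby $E'$. The workable version has three ingredients:
\begin{itemize}
\item a rotation number defined intrinsically for every energy (through the action on Lagrangian planes) that equals $N$. This is a \emph{signed} combination of the real $\rho_j$ according to Krein signature, as already at $\varepsilon=0$, where $N(E)=|\{v<E\}|$ is $\sum\pm\xi_j$ over the real roots of $v=E$;
\item conjugating $A_{E'}$ by the \emph{same} $B_E$, which freezes the degree contribution;
\item a perturbation lemma for this rotation number near $\Lambda$, applied on the persistent center bundle.
\end{itemize}

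\emph{Step 3.} Log-H\"older continuity is not enough, and it is all that a Craig--Simon argument gives. A log-H\"older distribution function can charge sets of Hausdorff dimension zero: take a Frostman measure for the gauge $r\mapsto1/\log(1/r)$ on a suitable Cantor set. You need one of the following:
\begin{itemize}
\item genuine H\"older continuity, $|N(E)-N(E')|\le C|E-E'|^{\beta}$ with $\beta>0$;
\item a direct covering of $\mathcal S$ by KAM resonance intervals whose $dN$-masses are summable and small.
\end{itemize}
Neither is contained in Theorem~\ref{lem:reduction} as quoted; it must be extracted from the quantitative reducibility scheme of \cite{WXYZ}. Once you have it, Step 3 alone already gives $N(\mathcal S)=0$. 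That is the only consequence of absolute continuity the paper actually uses, in Proposition~\ref{non}.
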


Fix $x\in \mathcal{A}$, define the ``bad'' eigenvalues
\[
\mathcal{E}_x
:=
\{E(x+\langle k,\alpha\rangle):\ E(x+\langle k,\alpha\rangle)\in\mathcal{S}\},
\]
where $\mathcal{S}$ is the exceptional set appearing in Lemma~\ref{lem:reduction}.
Let
 $\mathcal{E}
:=
\bigcup_{x\in\mathcal{A}}\mathcal{E}_x.$
Clearly, $\mathcal{E}\subset \mathcal{S}$. What's important for us is the following: 

\begin{proposition}\label{non}
    For almost every \(x\in\mathcal{A}\), one has \(\mathcal{E}_x = \emptyset\).
\end{proposition}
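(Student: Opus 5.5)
Since $\mathcal{S}$ has zero Hausdorff dimension, it is Lebesgue null. By Theorem~\ref{ids} the IDS $N$ is absolutely continuous, so its density of states measure $dN$ gives $dN(\mathcal{S})=0$. The plan is to turn this into a statement about phases. The eigenvalue function $E(\cdot)$ of Theorem~\ref{thm:point} pushes Lebesgue measure on $\mathbb{T}$ forward to $dN$. Granting this, the set of phases $y$ with $E(y)\in\mathcal{S}$ is Lebesgue null, and its countably many translates by $\langle k,\alpha\rangle$ are also null. Hence for a.e.\ $x$, no $E(x+\langle k,\alpha\rangle)$ lies in $\mathcal{S}$, which is exactly $\mathcal{E}_x=\emptyset$.

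The key steps, in order, are as follows.

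First, set $\mathcal{B}:=\{y\in\mathbb{T}: E(y)\in\mathcal{S}\}$. This set is Borel measurable because $E$ is Borel and $\mathcal{S}$ may be replaced by a Borel null superset.

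Second, establish the identity
\[
N(\mathcal{S}')=\bigl|\{y\in\mathbb{T}: E(y)\in\mathcal{S}'\}\bigr|
\]
for Borel $\mathcal{S}'\subset\mathbb{R}$, i.e.\ $E_*(\mathrm{Leb})=dN$. For this I would use the standard formula
\[
dN(\mathcal{S}')=\int_{\mathbb{T}}\langle \delta_0,\chi_{\mathcal{S}'}(H_{v,\varepsilon w,\alpha,x})\delta_0\rangle\,dx .
\]
Expanding $\delta_0$ in the complete orthonormal eigenbasis $\{u^k_x\}$ (normalized) gives
\[
dN(\mathcal{S}')=\int_{\mathbb{T}}\sum_k \chi_{\mathcal{S}'}\bigl(E(x+\langle k,\alpha\rangle)\bigr)\,|u^k_x(0)|^2\,dx .
\]
Eliasson's construction yields the covariance $u^k_x(n)=u^0_{x+\langle k,\alpha\rangle}(n+k)$. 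Substituting $y=x+\langle k,\alpha\rangle$ and summing over $k$ turns the integrand into $\chi_{\mathcal{S}'}(E(y))\sum_m |u^0_y(m)|^2=\chi_{\mathcal{S}'}(E(y))$, which gives the identity.

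Third, conclude. We have $|\mathcal{B}|=dN(\mathcal{S})=0$ by absolute continuity and $|\mathcal{S}|=0$. Then the set
\[
\bigcup_{k\in\mathbb{Z}^d}\bigl(\mathcal{B}-\langle k,\alpha\rangle\bigr)
\]
is null, and every $x\in\mathcal{A}$ outside it satisfies $\mathcal{E}_x=\emptyset$.

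The main obstacle is the second step. Theorem~\ref{thm:point} as stated does not explicitly give the covariance of eigenfunctions, nor that each $u^k$ is the unique (normalized) eigenvector for its eigenvalue. Both are needed to rewrite the trace over the eigenbasis as a pushforward.

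If covariance is not available, I would use a weaker inequality route. A measurable choice of normalized eigenvector $u_y$ for $E(y)$ gives $dN(\mathcal{S}')\ge$ a positive-weight average over $\{y:E(y)\in\mathcal{S}'\}$, using completeness of the basis and the fact that $\sum_n |u_y(n)|^2=1$ averages out under shifts. Hence $dN(\mathcal{S})=0$ still forces $|\mathcal{B}|=0$. In short, all that is needed is absolute continuity of $E_*(\mathrm{Leb})$ with respect to $dN$, which follows from $dN$ dominating the spectral contribution of each eigenvalue branch.
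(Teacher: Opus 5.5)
Your proposal is correct, but only through your fallback argument, and it is organized differently from the paper. Step 2 of your primary route (the identity $E_*(\mathrm{Leb})=dN$) cannot be justified from Theorem~\ref{thm:point} as stated. The expansion $\mu_{x,\delta_0}(\mathcal S')=\sum_k\chi_{\mathcal S'}(E(x+\langle k,\alpha\rangle))\,|u^k_x(0)|^2$ needs the eigenbasis to be orthonormal, which is not asserted: eigenvectors for coinciding values $E(x+\langle k,\alpha\rangle)$ need not be orthogonal. The resummation over $k$ also needs covariance of the eigenfunctions, which is not provided either.

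The fallback, however, is sound once made precise. Take a Borel null $\mathcal S''\supset\mathcal S$. For every $n$ you have $\int_{\mathbb T}\mu_{y,\delta_n}(\mathcal S'')\,dy=N(\mathcal S'')=0$, so off a null set $\mu_{y,\delta_n}(\mathcal S'')=0$ for all $n$ simultaneously. The pointwise bound $\mu_{y,\delta_n}(\mathcal S'')\ge\chi_{\mathcal S''}(E(y))\,|u_y(n)|^2$, together with $u_y\neq0$, then forces $E(y)\notin\mathcal S''$. You need neither a measurable selection of $u_y$ nor completeness of the basis, since only the measurable function $y\mapsto\mu_{y,\delta_n}(\mathcal S'')$ is integrated. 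Translation invariance then finishes the argument as you describe.

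The paper uses exactly this mechanism, but at a fixed phase. It takes the full-measure set $\mathcal F$ on which $\mu_{x,\delta_p}(\mathcal S)=0$ for all $p$. It then observes that any eigenvalue $E$ of $H_{v,\varepsilon w,\alpha,x}$ has $\mu_{x,\delta_p}(\{E\})>0$ for some $p$, so for $x\in\mathcal F$ the operator has no eigenvalues in $\mathcal S$ at all. This bypasses the phase set $\mathcal B$, the translates by $\langle k,\alpha\rangle$, and any use of the labeling $E(\cdot)$ beyond the fact that elements of $\mathcal E_x$ are eigenvalues. It also gives a slightly stronger, labeling-free conclusion.

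Your route instead yields information about the eigenvalue branch itself: $E_*(\mathrm{Leb})\ll dN$, with equality if covariance and orthonormality were available. That is more than Proposition~\ref{non} requires. One point in your favour: explicitly replacing $\mathcal S$ by a Borel null superset is care the paper omits, since it writes $\mu_{x,\delta_p}(\mathcal S)$ without addressing measurability of $\mathcal S$.
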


\begin{proof}
Let \(N\) be the density of states and \(\mu_{x,\delta_p}\) the spectral measure of
\(H_{v,\varepsilon w,\alpha,x}\) corresponding to the vector \(\delta_p\;(p\in\mathbb{Z}^d)\).
Since \(N\) is absolutely continuous (Theorem~\ref{ids}) and the exceptional set
\(\mathcal{S}\) has Lebesgue measure zero (Theorem \ref{lem:reduction}), we have \(N(\mathcal{S})=0\). By definition,
\[
N(\mathcal{S}) = \int_{\mathbb{T}} \mu_{x,\delta_p}(\mathcal{S})\,dx,
\qquad p\in\mathbb{Z}^d.
\]
Hence for each fixed \(p\), the integrand vanishes for almost every \(x\). Taking the
countable intersection over all \(p\), we obtain a full‑measure set
\(\mathcal{F}\subset\mathbb{T}^d\) such that
$\mu_{x,\delta_p}(\mathcal{S}) = 0, \forall\,x\in\mathcal{F}, \forall\,p\in\mathbb{Z}^d.$

Because \(\mathcal{E}_x\subset\mathcal{S}\), it follows that
\(\mu_{x,\delta_p}(\mathcal{E}_x)=0\) for all \(x\in\mathcal{F}\) and all \(p\).
Now suppose, for some \(x\in\mathcal{F}\), that \(\mathcal{E}_x\neq\emptyset\). 
By Theorem~\ref{thm:point} the operator \(H_{v,\varepsilon w,\alpha,x}\) has pure point
spectrum with eigenvalues \(\{E(x+\langle k,\alpha\rangle)\}_{k\in\mathbb{Z}^d}\).
Consequently any \(E\in\mathcal{E}_x\) is an eigenvalue and there exists \(p\in\mathbb{Z}^d\)
for which \(\mu_{x,\delta_p}(\{E\})>0\) and thus
$\mu_{x,\delta_p}(\mathcal{E}_x)\ge\mu_{x,\delta_p}(\{E\})>0,
$
contradicting \(\mu_{x,\delta_p}(\mathcal{E}_x)=0\) for $x\in \mathcal{F}$. Therefore \(\mathcal{E}_x=\emptyset\)
for all \(x\in\mathcal{F}\), i.e., for almost every \(x\).
\end{proof}

\begin{proof}[Proof of Theorem~\ref{main}]
By Proposition~\ref{non}, for almost every $x\in\mathbb{T}$ and every $k\in\mathbb{Z}^d$,
the cocycle $A_{E(x+\langle k,\alpha\rangle)}$ is \(C_{\frac{h}{4}}^\omega\)-diagonally reducible.
By Proposition~\ref{thm:important}, this implies that the eigenfunction
$u^{k}$ associated with the energy $E(x+\langle k,\alpha\rangle)$
decays exponentially.
By Theorem~\ref{thm:point}, the family $\{u^{k}\}_{k\in\mathbb{Z}^d}$
forms a complete basis of $\ell^{2}(\mathbb{Z}^d)$.
Therefore, $\{u^{k}\}_{k\in\mathbb{Z}^d}$ is a complete family of eigenfunctions of
$H_{v,\varepsilon w,\alpha,x}$, each of which decays exponentially.
\end{proof}

\section*{Acknowledgements} 
Qi Zhou would like to thank Artur Avila for insightful discussions, which ultimately led  to this note. 
 Zhenfu Wang is supported by NSFC grant (124B2011) and Nankai Zhide Foundation. Jiangong You and  Qi Zhou are supported by NSFC grant (12531006, 12526201) and Nankai Zhide Foundation.

\end{document}